\newtheorem{theorem}{Theorem}[section]
\newtheorem*{theorem*}{Theorem}
\newtheorem*{proposition*}{Proposition}
\newtheorem{lemma}[theorem]
{Lemma}
\newtheorem*{lemma*}{Lemma}
\newtheorem*{corollary*}{Corollary}
\newtheorem*{note*}{Note}
\newtheorem*{fact*}{Fact}
\theoremstyle{definition}
\newtheorem{definition}[theorem]
{Definition}
\newtheorem*{definition*}{Definition}
\newtheorem*{example*}{Example}
\newtheorem*{problem*}{Problem}
\newtheorem{axiom*}{Axiom}
\newtheorem*{notation*}{Notation}
\theoremstyle{remark}
\newtheorem{remark}[theorem]
{Remark}
\newtheorem*{remark*}{Remark}
\let\mb\mathbb
\let\mc\mathcal
\let\ve\varepsilon
\let\pt\partial
\newcommand{\ce}{\mathrel{\mathop:}=} 
\numberwithin{equation}{section}
\title
{\LARGE{Strong instability of standing waves \\
with negative energy for double power \\
nonlinear Schr\"odinger equations}}
\author
{Noriyoshi Fukaya and Masahito Ohta}
\date{}
\begin{document}
\maketitle
\begin{abstract}
We study the strong instability of ground-state standing waves $e^{i\omega t}\phi_\omega(x)$ for $N$-dimensional nonlinear Schr\"odinger equations with double power nonlinearity.
One is $L^2$-subcritical,
and the other is $L^2$-supercritical.
The strong instability of standing waves with positive energy was proven by Ohta and Yamaguchi~(2015).
In this paper,
we improve the previous result, 
that is,
we prove that if $\partial_\lambda^2S_\omega(\phi_\omega^\lambda)|_{\lambda=1}\le0$,
the standing wave is strongly unstable,
where $S_\omega$ is the action,
and $\phi_\omega^\lambda(x)\ce\lambda^{N/2}\phi_\omega(\lambda x)$ is the $L^2$-invariant scaling.
\end{abstract}
\footnotetext[0]{2010 \textit{Mathematics Subject Classification}. 
35Q55,
35B35}
\footnotetext[0]{\textit{Key words and phrases}.
NLS, ground state, blowup
}
\section{Introduction}
In this paper,
we consider the nonlinear Schr\"odinger equation with double power nonlinearity
\[ \label{nls}\tag{NLS}
i\pt_tu
=-\Delta u
-a|u|^{p-1}u
-b|u|^{q-1}u,\quad
(t,x)\in\mathbb{R}\times\mathbb{R}^N,
\]
where 
\begin{equation}\label{asmp}
N\in\mb{N},\quad
a>0,\quad
b>0,\quad
1<p<1+\frac{4}{N}<q<1+\frac{4}{N-2},
\end{equation}
and $u\colon\mb{R}\times\mb{R}^N\to\mb{C}$ is the unknown function of $(t,x)\in\mb{R}\times\mb{R}^N$.
Here, $1+4/(N-2)$ stands for $\infty$ if $N=1$ or $2$.
Eq.~\eqref{nls} appears in various regions of mathematical physics
(see \cite{BGMP89,Fibich,SS} and references therein).

The Cauchy problem for \eqref{nls} is locally well-posed in the energy space $H^1(\mb{R}^N)$ (see, e.g., \cite{Cazenave,Kato87}),
that is,
for each $u_0\in H^1(\mb{R}^N)$,
there exist the maximal lifespan $T_{\max}=T_{\max}(u_0)\in(0,\infty]$ and a unique solution $u\in C([0,T_{\max}),H^1(\mb{R}^N))$ of \eqref{nls} with $u(0)=u_0$ such that if $T_{\max}<\infty$,
then $\lim_{t\nearrow T_{\max}}\|\nabla u(t)\|_{L^2}=\infty$.
In the case $T_{\max}<\infty$,
we say that the solution $u(t)$ \textit{blows up in finite time}.
Moreover,
\eqref{nls} satisfies the two conservation laws
\[
E(u(t))
=E(u_0),\quad
\|u(t)\|_{L^2}
=\|u_0\|_{L^2}
\]
for all $t\in[0,T_{\max})$,
where $E$ is the energy defined by
\[
E(v)
=\frac12\|\nabla v\|_{L^2}^2
-\frac{a}{p+1}\|v\|_{L^{p+1}}^{p+1}
-\frac{b}{q+1}\|v\|_{L^{q+1}}^{q+1}.
\]
Furthermore, if
\begin{equation}\label{Sigma}
u_0
\in\Sigma
\ce\{\,v\in H^1(\mb{R}^N)\mid
\|xv\|_{L^2}<\infty\,\},
\end{equation}
then the solution $u(t)$ of \eqref{nls} with $u(0)=u_0$ belongs to $C([0,T_{\max}),\Sigma)$
and satisfies the virial identity
\begin{equation}\label{virial}
\frac{d^2}{dt^2}\|xu(t)\|_{L^2}^2
=8Q(u(t))
\end{equation}
for all $t\in[0,T_{\max})$
(see \cite[Section~6.5]{Cazenave}), where 
$v^\lambda(x)
=\lambda^{N/2}v(\lambda x)$ and
\begin{align}\label{defQ}
Q(v)
&=\pt_{\lambda}S_\omega(v^\lambda)|_{\lambda=1} \\ \notag
&=\|\nabla v\|_{L^2}^2
-\frac{aN(p-1)}{2(p+1)}\|v\|_{L^{p+1}}^{p+1}
-\frac{bN(q-1)}{2(q+1)}\|v\|_{L^{q+1}}^{q+1}.
\end{align}

Eq.\ \eqref{nls} has standing wave solutions of the form $e^{i\omega t}\phi(x)$,
where $\omega>0$ and $\phi\in H^1(\mb{R}^N)$ is a nontrivial solution of the stationary equation
\begin{equation}\label{se}
-\Delta\phi
+\omega\phi
-a|\phi|^{p-1}\phi
-b|\phi|^{q-1}\phi
=0,\quad
x\in\mb{R}^N.
\end{equation}
Eq.\ \eqref{se} can be rewritten as $S_\omega'(\phi)=0$,
where $S_\omega$ is the action defined by
\begin{align*}
S_\omega(v)
&=E(v)+\frac{\omega}{2}\|v\|_{L^2}^2 \\
&=\frac12\|\nabla v\|_{L^2}^2
+\frac\omega2\|v\|_{L^2}^2
-\frac a{p+1}\|v\|_{L^{p+1}}^{p+1}
-\frac b{q+1}\|v\|_{L^{q+1}}^{q+1}.
\end{align*}
It is known that if $\omega>0$,
then \eqref{se} has \textit{ground state} solutions, that is, the set 
\[
\mc{G}_\omega
\ce\{\,\phi\in\mc{F}_\omega\mid
S_\omega(\phi)\le S_\omega(v)~\text{for all}~v\in\mc{F}_\omega\,\}
\]
of nontrivial solutions to \eqref{se} with the minimal action is not empty
(see, e.g., \cite{BL83,Lions84,Strauss77}),
where
\[
\mc{F}_\omega
\ce\{\,v\in H^1(\mb{R}^N)\setminus\{0\}\mid
S_\omega'(v)=0\,\}
\]
is the set of all nontrivial solutions of \eqref{se}.

The stability and instability of standing waves are defined as follows.

\begin{definition}
Let $\phi\in\mc{F}_\omega$ be a nontrivial solution of \eqref{se}.
\begin{itemize}
\item
We say that the standing wave solution $e^{i\omega t}\phi$ of \eqref{nls} is \textit{stable} if for each $\ve>0$,
there exists $\delta>0$ such that if $u_0\in H^1(\mb{R}^N)$ satisfies $\|u_0-\phi\|_{H^1}<\delta$,
then the solution $u(t)$ of \eqref{nls} with $u(0)=u_0$ exists globally in time and satisfies
\[
\sup_{t\ge0}\inf_{(\theta,y)\in\mb{R}\times\mb{R}^N}\|u(t)-e^{i\theta}\phi(\cdot-y)\|_{H^1}<\ve.
\]
\item
We say that the standing wave solution $e^{i\omega t}\phi$ of \eqref{nls} is \textit{unstable} if it is not stable.
\item
We say that the standing wave solution $e^{i\omega t}\phi$ of \eqref{nls} is \textit{strongly unstable} if for each $\ve>0$,
there exists $u_0\in H^1(\mb{R}^N)$ such that $\|u_0-\phi\|_{H^1}<\delta$,
and the solution $u(t)$ of \eqref{nls} with $u(0)=u_0$ blows up in finite time.
\end{itemize}
\end{definition}

In this paper, we study the strong instability of the standing wave solution $e^{i\omega t}\phi_\omega$ for \eqref{nls},
where $\omega>0$, 
and $\phi_\omega\in\mc{G}_\omega$ is a ground state.

In the single power and 
$L^2$-critical or $L^2$-supercritical case when $a=0$,
$b>0$,
and $1+4/N\le q<1+4/(N-2)$,
Berestycki and Cazenave~\cite{BC81} proved that the standing wave is strongly unstable for any $\omega>0$
(see also \cite{Weinstein8283} for the case $q=1+4/N$),
whereas
in $L^2$-subcritical case when $a>0$,
$b=0$, and
$1<p<1+4/N$,
Cazenave and Lions~\cite{CL82} proved that the standing wave is stable for any $\omega>0$.

In the double power case when \eqref{asmp} is assumed,
the argument of Ohta~\cite{Ohta95-1} showed the instability of standing waves for sufficiently large $\omega>0$.
In \cite{Ohta95-1},
he proved that if $\pt_\lambda^2S_\omega(\phi_\omega^\lambda)|_{\lambda=1}<0$,
then the standing wave is unstable,
where $v^\lambda(x)\ce\lambda^{N/2}v(\lambda x)$ is the scaling,
which does not change the $L^2$-norm.
On the other hand,
Fukuizumi~\cite{Fukuizumi03} proved
the stability of standing waves for sufficiently small $\omega>0$.
See also \cite{Maeda08,Ohta95-d} for the stability and instability in one dimensional case.
The strong instability of standing waves for sufficiently large $\omega$ was proven by Ohta and Yamaguchi \cite{OY15}.
In \cite{OY15},
they proved the strong instability of standing waves with positive energy $E(\phi_\omega)>0$ by using and modifying the idea of Zhang~\cite{Zhang02} and Le Coz~\cite{LeCoz08-1}
(see also \cite{OY16} for related works).

Recently,
for the nonlinear Schr\"odinger equation with harmonic potential,
Ohta~\cite{Ohta18} proved that if $\pt_\lambda^2\tilde S_\omega(\phi_\omega^\lambda)|_{\lambda=1}\le0$, then the standing waves is strongly unstable,
where $\tilde S_\omega$ is the corresponding action.
This assumption is the same one as in Ohta~\cite{Ohta95-1}.
More recently,
Fukaya and Ohta~\cite{FOpre} proved the strong instability of standing waves for nonlinear Schr\"odinger equation with an attractive inverse power potential
\begin{equation}\label{nlsi}
i\pt_tu
=-\Delta u
-\frac{\gamma}{|x|^\alpha}u
-|u|^{q-1}u,\quad
(t,x)\in\mb{R}\times\mb{R}^N
\end{equation}
with $\gamma>0$, $0<\alpha<\min\{2,N\}$, and $1+4/N<q<1+4/(N-2)$
under the same assumption $\pt_\lambda^2\tilde S_\omega(\phi_\omega^\lambda)|_{\lambda=1}\le0$ as in \cite{Ohta18} by using the idea of Ohta \cite{Ohta18} with some modifications.


For \eqref{nls},
the strong instability of standing waves with negative energy was not known.
The aim of this paper is to prove the strong instability under the same assumption $\pt_\lambda^2S_\omega(\phi_\omega^\lambda)|_{\lambda=1}\le0$ as in \cite{FOpre,Ohta18}.
Now, we state our main result.

\begin{theorem}\label{mainthm}
Assume \eqref{asmp}, $\omega>0$,
and that $\phi_\omega\in\mc{G}_\omega$ satisfies $\pt_\lambda^2S_\omega(\phi_\omega^\lambda)|_{\lambda=1}\le0$,
where $\phi_\omega^\lambda(x)=\lambda^{N/2}\phi_\omega(\lambda x)$.
Then the standing wave solution $e^{i\omega t}\phi_\omega$ of \eqref{nls} is strongly unstable.
\end{theorem}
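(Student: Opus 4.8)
\emph{Overall strategy.} The plan is to prove strong instability by the classical virial/concavity mechanism, in the sharpened form developed by Ohta~\cite{Ohta18} and by Fukaya--Ohta~\cite{FOpre}: exhibit perturbations of $\phi_\omega$ lying in a subset of $H^1(\mb{R}^N)$ that is invariant under the flow of \eqref{nls} and on which $Q$ stays bounded away from $0$, and then conclude with the virial identity \eqref{virial}. The perturbations will be the $L^2$-invariant rescalings $\phi_\omega^\lambda$ with $\lambda>1$. Since ground states decay exponentially we have $\phi_\omega\in\Sigma$, hence $\phi_\omega^\lambda\in\Sigma$, and $\|\phi_\omega^\lambda-\phi_\omega\|_{H^1}\to0$ as $\lambda\downarrow1$; so it suffices to show that the solution of \eqref{nls} with initial datum $\phi_\omega^\lambda$ blows up in finite time for every $\lambda>1$ close enough to $1$.

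\emph{Step 1: the rescaled profiles (where the hypothesis enters).} Put $f(\lambda)\ce S_\omega(\phi_\omega^\lambda)$; this is an explicit linear combination of $\lambda^2$, $1$, $\lambda^{N(p-1)/2}$ and $\lambda^{N(q-1)/2}$, the last two coefficients being negative. Since $S_\omega'(\phi_\omega)=0$ we have $f'(1)=Q(\phi_\omega)=0$; by hypothesis $f''(1)=\pt_\lambda^2S_\omega(\phi_\omega^\lambda)|_{\lambda=1}\le0$; and $Q(\phi_\omega^\lambda)=\lambda f'(\lambda)$ by \eqref{defQ}. Writing $g(\lambda)\ce\lambda^{1-N(p-1)/2}f'(\lambda)$, one checks from \eqref{asmp} that $g$ is strictly increasing, then strictly decreasing, on $(0,\infty)$, with $g(1)=Q(\phi_\omega)=0$ and $g'(1)=f''(1)\le0$; hence the unique maximum point of $g$ lies in $(0,1]$, so $g<0$ on $(1,\infty)$. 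Therefore $Q(\phi_\omega^\lambda)=\lambda f'(\lambda)<0$ and $S_\omega(\phi_\omega^\lambda)=f(\lambda)<f(1)=S_\omega(\phi_\omega)$ for \emph{every} $\lambda>1$. This is exactly where $f''(1)\le0$ is used, and it treats the degenerate case $f''(1)=0$ --- and the negative-energy range compatible with $f''(1)\le0$ --- on the same footing.

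\emph{Step 2: variational characterization and the invariant set.} Next I would prove the lemma
\[
S_\omega(\phi_\omega)=\inf\{\,S_\omega(v) : v\in H^1(\mb{R}^N)\setminus\{0\},\ Q(v)=0\,\},
\]
the infimum being attained exactly by ground states; the bound $\le$ is immediate from $Q(\phi_\omega)=0$, and for $\ge$ one takes a minimizing sequence, recovers (modulo translations) a minimizer, checks via a Lagrange-multiplier and scaling argument that it solves \eqref{se}, and invokes the minimality of the ground-state action (compare the variational analysis in \cite{OY15}). What is used afterwards is only: $v\ne0$ and $Q(v)=0$ force $S_\omega(v)\ge S_\omega(\phi_\omega)$. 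Now fix $\lambda>1$, let $u_0=\phi_\omega^\lambda\in\Sigma$ and let $u\in C([0,T_{\max}),\Sigma)$ be the maximal solution of \eqref{nls}. Since $E$ and $\|\cdot\|_{L^2}$ are conserved, so is $S_\omega$, so $S_\omega(u(t))=S_\omega(\phi_\omega^\lambda)<S_\omega(\phi_\omega)$ for all $t$; and $Q(u(t))<0$ on $[0,T_{\max})$, because if $Q(u(t))$ first vanished at some $t_1$ then $u(t_1)\ne0$ (as $\|u(t_1)\|_{L^2}=\|\phi_\omega\|_{L^2}\ne0$), contradicting the characterization.

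\emph{Step 3: uniform bound on $Q$, blow-up, and the main obstacle.} By the conservation laws, $u(t)$ stays in $\mc{V}_\lambda\ce\{\,v\in H^1(\mb{R}^N)\setminus\{0\}:\|v\|_{L^2}=\|\phi_\omega\|_{L^2},\ S_\omega(v)\le S_\omega(\phi_\omega^\lambda),\ Q(v)\le0\,\}$, which by Step 2 is disjoint from $\{Q=0\}$, so $\sup_{\mc{V}_\lambda}Q\le0$; the decisive point is to upgrade this to $\sup_{\mc{V}_\lambda}Q\le-\delta<0$ for some $\delta=\delta(\lambda)>0$. Arguing by contradiction, a sequence $v_n\in\mc{V}_\lambda$ with $Q(v_n)\to0^-$ can be shown --- using the Gagliardo--Nirenberg inequalities together with the $L^2$ and action constraints defining $\mc{V}_\lambda$ --- to be bounded in $H^1(\mb{R}^N)$, and a concentration-compactness argument then yields (after a small rescaling $v_n\mapsto v_n^{\mu_n}$ with $\mu_n\to1$) a nonzero $w$ with $Q(w)=0$ and $S_\omega(w)\le S_\omega(\phi_\omega^\lambda)<S_\omega(\phi_\omega)$, contradicting Step 2. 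Granting the bound, \eqref{virial} gives
\[
\frac{d^2}{dt^2}\|xu(t)\|_{L^2}^2=8Q(u(t))\le-8\delta<0\qquad(0\le t<T_{\max}),
\]
so $\|xu(t)\|_{L^2}^2$ would become negative in finite time if $T_{\max}=\infty$; hence $T_{\max}<\infty$, i.e.\ $u$ blows up in finite time, and since $\|\phi_\omega^\lambda-\phi_\omega\|_{H^1}\to0$ as $\lambda\downarrow1$ the theorem follows. The main obstacle is precisely this uniform bound: for a single $L^2$-supercritical power one simply bounds $Q$ by the conserved energy, but the extra $L^2$-subcritical term enters $Q$ and $E$ with the opposite relative sign, so that route fails and one must instead extract the bound from the joint constraint ``$S_\omega(v)<S_\omega(\phi_\omega)$ and $Q(v)<0$''; making this work uniformly, in the borderline case $\pt_\lambda^2S_\omega(\phi_\omega^\lambda)|_{\lambda=1}=0$ and for $E(\phi_\omega)\le0$, is the genuinely new content, with the $\{Q=0\}$-characterization of Step 2 for the mixed subcritical/supercritical nonlinearity as a secondary but nontrivial ingredient.
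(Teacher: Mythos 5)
There is a genuine gap, and it is in your Step 2. The characterization $S_\omega(\phi_\omega)=\inf\{S_\omega(v):v\in H^1(\mb{R}^N)\setminus\{0\},\ Q(v)=0\}$ is false under \eqref{asmp}, because the $L^2$-subcritical power makes the constraint set $\{Q=0\}$ contain functions of arbitrarily small action. Indeed, fix any $w\in H^1(\mb{R}^N)\setminus\{0\}$ and consider $v=c\,w^{\lambda}$. Since $\alpha=N(p-1)/2<2<\beta=N(q-1)/2$, for fixed small $c>0$ one has $Q(c\,w^{\lambda})<0$ for $\lambda$ small (the $-\lambda^{\alpha}$ term dominates $\lambda^{2}$ and $\lambda^{\beta}$ as $\lambda\searrow0$), while $Q(c\,w)>0$ for $c$ small; by the intermediate value theorem there is $\lambda_c\in(0,1)$ with $Q(c\,w^{\lambda_c})=0$, and then $S_\omega(c\,w^{\lambda_c})\le\tfrac{c^2}{2}\bigl(\|\nabla w\|_{L^2}^2+\omega\|w\|_{L^2}^2\bigr)\to0$ as $c\searrow0$, whereas $S_\omega(\phi_\omega)=\tfrac12F(\phi_\omega)>0$. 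So the implication you rely on afterwards, namely ``$v\ne0$ and $Q(v)=0$ imply $S_\omega(v)\ge S_\omega(\phi_\omega)$,'' fails; with it fail the flow-invariance of $\{Q<0\}$ in your Step 2 and the contradiction argument for the uniform bound in Step 3. This is exactly the obstruction caused by the mixed subcritical/supercritical nonlinearity: $\lambda\mapsto S_\omega(v^{\lambda})$ decreases near $\lambda=0$, so the Berestycki--Cazenave minimization on $\{Q=0\}$ is no longer tied to the ground state, and no concentration-compactness patch can restore an inequality that is simply false.

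The paper circumvents precisely this point. It uses the Nehari characterization \eqref{vari-char4} (rescaling to $K_\omega(v^{\lambda_0})=0$ with $\lambda_0\in(0,1]$ is always possible because $K_\omega(v^{\lambda})\to\omega\|v\|_{L^2}^2>0$ as $\lambda\searrow0$), enlarges the invariant set to $\mc{B}_\omega$ by adding the constraint $K_\omega<0$, and replaces your compactness step by the quantitative estimate $Q(v)/2\le S_\omega(v)-S_\omega(\phi_\omega)$ of Lemma~\ref{keylem}, which at once yields invariance of $Q<0$ and the uniform negative bound $8Q(u(t))\le16(S_\omega(u_0)-S_\omega(\phi_\omega))<0$ needed for the virial argument. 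Note also that the hypothesis $\pt_\lambda^2S_\omega(\phi_\omega^{\lambda})|_{\lambda=1}\le0$ does its real work inside that lemma (it gives $\omega\|\phi_\omega\|_{L^2}^2\le(\tfrac{q+1}{\beta}+\tfrac2\alpha-1-\tfrac4{\alpha\beta})\tfrac N2F(\phi_\omega)$, the substitute for the missing $\{Q=0\}$ characterization); in your outline it is used only to place the initial data, which is a symptom of the missing ingredient. Your Step 1 (that $\phi_\omega^{\lambda}\in\Sigma$ has $Q<0$ and smaller action for all $\lambda>1$) and the final concavity conclusion are correct and coincide with the paper's Lemma~\ref{pol} and the proof of Theorem~\ref{blowup}, but as it stands the central step of your proposal does not hold.
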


\begin{remark}
In the case \eqref{asmp},
$E(\phi_\omega)>0$ implies $\pt_\lambda^2S_\omega(\phi_\omega^\lambda)|_{\lambda=1}<0$.
Indeed, 
let
$\alpha=N(p-1)/2$ and
$\beta=N(q-1)/2$.
Then since $Q(\phi_\omega)=\pt_{\lambda}S_\omega(\phi_\omega^\lambda)|_{\lambda=1}=0$ and
$0<\alpha<2<\beta$,
we have
\begin{align*}
\pt_\lambda^2S_\omega(\phi_\omega^\lambda)|_{\lambda=1}
&=\|\nabla\phi_\omega\|_{L^2}^2
-\frac{a\alpha(\alpha-1)}{p+1}\|\phi_\omega\|_{L^{p+1}}^{p+1}
-\frac{b\beta(\beta-1)}{q+1}\|\phi_\omega\|_{L^{q+1}}^{q+1} \\
&=(\alpha+1)Q(\phi_\omega)
-2\alpha E(\phi_\omega)
-\frac{b(\beta-2)(\beta-\alpha)}{q+1}\|\phi_\omega\|_{L^{q+1}}^{q+1} \\
&<0.
\end{align*}
Therefore,
Theorem~\ref{mainthm} is an improvement of the result of Ohta and Yamaguchi~\cite{OY15}.
\end{remark}

To prove Theorem~\ref{mainthm},
we introduce the set
\[
\mc{B}_\omega
\ce\left\{v\in H^1(\mb{R}^N)\mathrel{}\middle|\mathrel{}
\begin{alignedat}{2}
&S_\omega(v)<S_\omega(\phi_\omega),~& &
\|v\|_{L^2}\le\|\phi_\omega\|_{L^2}, \\
&K_\omega(v)<0,~& &
Q(v)<0
\end{alignedat}
\right\},
\]
where
\begin{equation}\label{Fv}
\begin{aligned}
K_\omega(v)
&\ce\pt_\lambda S_\omega(\lambda v)|_{\lambda=1} \\
&=\|\nabla v\|_{L^2}^2
+\omega\|v\|_{L^2}^2
-a\|v\|_{L^{p+1}}^{p+1}
-b\|v\|_{L^{q+1}}^{q+1}
\end{aligned}
\end{equation}
is the Nehari functional.
Then we obtain the following blowup result.

\begin{theorem} \label{blowup}
Assume \eqref{asmp}, $\omega>0$,
and that $\phi_\omega\in\mc{G}_\omega$ satisfies $\pt_\lambda^2S_\omega(\phi_\omega^\lambda)|_{\lambda=1}\le0$.
Let $u_0\in\mc{B}_\omega\cap\Sigma$.
Then the solution $u(t)$ of \eqref{nls} with $u(0)=u_0$ blows up in finite time.
\end{theorem}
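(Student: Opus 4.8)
The plan is to show that $\mc B_\omega$ is invariant under the flow of \eqref{nls}, to establish on $\mc B_\omega$ a coercive bound of the form $Q(v)\le\kappa\bp{S_\omega(v)-S_\omega(\phi_\omega)}$ with some $\kappa>0$, and then to conclude by the classical convexity (virial) argument. Since $u_0\in\Sigma$, the solution $u$ stays in $C([0,T_{\max}),\Sigma)$ and satisfies \eqref{virial}. By conservation of energy and mass, $S_\omega(u(t))=S_\omega(u_0)<S_\omega(\phi_\omega)$ and $\|u(t)\|_{L^2}=\|u_0\|_{L^2}\le\|\phi_\omega\|_{L^2}$ for all $t\in[0,T_{\max})$, so the first two conditions defining $\mc B_\omega$ persist and $u(t)\ne0$. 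For $K_\omega$: the map $t\mapsto K_\omega(u(t))$ is continuous, so if $K_\omega(u(t))<0$ failed there would be a first time $t_0$ with $K_\omega(u(t_0))=0$; then $u(t_0)$ would be a nontrivial point of the Nehari manifold, whence the standard characterization $S_\omega(\phi_\omega)=\inf\set{S_\omega(v)}{v\in H^1(\mb R^N)\setminus\br{0},\ K_\omega(v)=0}$ would give $S_\omega(u(t_0))\ge S_\omega(\phi_\omega)$, contradicting $S_\omega(u(t_0))=S_\omega(u_0)<S_\omega(\phi_\omega)$. The persistence of $Q(u(t))<0$ follows by the same continuity argument, once one has the key variational lemma stated next: at a first crossing time $t_0$ one has $K_\omega(u(t_0))<0$ from the previous step, $Q(u(t_0))=0$, and $\|u(t_0)\|_{L^2}\le\|\phi_\omega\|_{L^2}$, so the lemma applies and yields $S_\omega(u(t_0))\ge S_\omega(\phi_\omega)$, a contradiction.

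\emph{Key lemma: if $v\in H^1(\mb R^N)\setminus\br{0}$ satisfies $\|v\|_{L^2}\le\|\phi_\omega\|_{L^2}$, $K_\omega(v)\le0$ and $Q(v)=0$, then $S_\omega(v)\ge S_\omega(\phi_\omega)$.} To prove it I would combine a minimax characterization of $S_\omega(\phi_\omega)$ with the two-parameter family $(\mu,\lambda)\mapsto S_\omega(\mu v^\lambda)$: the scaling $\mu\mapsto\mu v$ can be brought to the Nehari manifold since $K_\omega(v)\le0$, while the $L^2$-invariant scaling curve $g_v(\lambda)\ce S_\omega(v^\lambda)$ satisfies $g_v'(1)=Q(v)=0$ and, from $(v^\lambda)^\mu=v^{\mu\lambda}$, $\lambda g_v'(\lambda)=Q(v^\lambda)$; the $L^2$-bound and the Nehari characterization then control the two scalings simultaneously. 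The hypothesis $\pt_\lambda^2S_\omega(\phi_\omega^\lambda)|_{\lambda=1}\le0$ enters precisely here: it rules out $\lambda=1$ being a strict local minimum of $g_{\phi_\omega}$, which is exactly what lets one identify $S_\omega(\phi_\omega)$ with the relevant maximal value along scalings and exclude a competitor $v$ with $Q(v)=0$, $K_\omega(v)\le0$, $\|v\|_{L^2}\le\|\phi_\omega\|_{L^2}$ and $S_\omega(v)<S_\omega(\phi_\omega)$.

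Granting the invariance of $\mc B_\omega$, it remains to produce the coercive bound $Q(v)\le\kappa\bp{S_\omega(v)-S_\omega(\phi_\omega)}$ on $\mc B_\omega$. Since $K_\omega(v)<0$ and $K_\omega(v^\lambda)\to\omega\|v\|_{L^2}^2>0$ as $\lambda\to0^+$, there is $\nu\in(0,1)$ with $K_\omega(v^\nu)=0$, so $S_\omega(v^\nu)\ge S_\omega(\phi_\omega)$ by the Nehari characterization and hence
\[
S_\omega(\phi_\omega)-S_\omega(v)\le S_\omega(v^\nu)-S_\omega(v)=-\int_\nu^1 g_v'(\lambda)\,d\lambda.
\]
A careful analysis of $g_v$ on $[\nu,1]$—using $g_v'(1)=Q(v)<0$, $K_\omega(v)<0$ and the exponent conditions $1<p<1+4/N<q<1+4/(N-2)$, which fix the qualitative shape of $g_v$ there—then bounds the right-hand side by a fixed multiple of $-Q(v)$, which is the claimed estimate. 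Feeding this into \eqref{virial} with $v=u(t)$ and using $S_\omega(u(t))=S_\omega(u_0)$ gives
\[
\frac{d^2}{dt^2}\|xu(t)\|_{L^2}^2=8Q(u(t))\le8\kappa\bp{S_\omega(u_0)-S_\omega(\phi_\omega)}<0
\]
on $[0,T_{\max})$; since the right-hand side is a fixed negative constant, the nonnegative function $t\mapsto\|xu(t)\|_{L^2}^2$ is bounded above by a downward parabola in $t$, which is impossible for large $t$ if $T_{\max}=\infty$. Hence $T_{\max}<\infty$ and $u$ blows up in finite time.

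The main obstacle is the key lemma together with the shape analysis of $g_v$ in the coercive estimate. In the negative-energy regime one cannot assume that $\lambda=1$ is a strict local maximum—or even a nondegenerate critical point—of $g_{\phi_\omega}$, so identifying $S_\omega(\phi_\omega)$ as the sharp threshold on $\set{v\ne0}{\|v\|_{L^2}\le\|\phi_\omega\|_{L^2},\ K_\omega(v)\le0,\ Q(v)=0}$, and in particular handling the degenerate equality case $\pt_\lambda^2S_\omega(\phi_\omega^\lambda)|_{\lambda=1}=0$, is the delicate point that goes beyond the positive-energy argument of \cite{OY15}.
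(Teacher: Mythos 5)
Your outline follows the same strategy as the paper (flow-invariance of $\mathcal{B}_\omega$, a coercive estimate of the form $Q(v)\le\kappa\,(S_\omega(v)-S_\omega(\phi_\omega))$, then the virial/convexity argument), and the invariance step for $S_\omega$, the $L^2$-norm and $K_\omega$ is done correctly. But the heart of the theorem is missing: both your ``key lemma'' (no $v$ with $\|v\|_{L^2}\le\|\phi_\omega\|_{L^2}$, $K_\omega(v)\le0$, $Q(v)=0$ can lie below the action level $S_\omega(\phi_\omega)$) and the coercive bound are only gestured at, and you yourself flag them as the main obstacle. Moreover, the route you indicate for the coercive bound would not work as stated: you claim that $-\int_\nu^1 g_v'(\lambda)\,d\lambda$ can be bounded by a fixed multiple of $-Q(v)$ ``using $g_v'(1)=Q(v)<0$, $K_\omega(v)<0$ and the exponent conditions'' alone. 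That cannot suffice, because such a bound is false without the hypothesis $\partial_\lambda^2S_\omega(\phi_\omega^\lambda)|_{\lambda=1}\le0$ and the mass constraint entering \emph{quantitatively}; indeed, the whole point of the theorem is that this spectral-type hypothesis on $\phi_\omega$ is what compensates for the negative-energy regime, and in your sketch it appears only in a vague remark about $\lambda=1$ not being a strict local minimum of $g_{\phi_\omega}$, never in an inequality.

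Concretely, the paper proves the single estimate $Q(v)/2\le S_\omega(v)-S_\omega(\phi_\omega)$ for all $v\ne0$ with $\|v\|_{L^2}\le\|\phi_\omega\|_{L^2}$, $K_\omega(v)\le0$, $Q(v)\le0$ (Lemma~\ref{keylem}), which simultaneously yields your threshold lemma (case $Q=0$) and the coercive bound with $\kappa=2$. Its proof is where all the work lies: one rescales to $\lambda_0\in(0,1]$ with $K_\omega(v^{\lambda_0})=0$, uses the Nehari characterization rewritten through $F(v)=\frac{a(p-1)}{p+1}\|v\|_{L^{p+1}}^{p+1}+\frac{b(q-1)}{q+1}\|v\|_{L^{q+1}}^{q+1}$ to get $F(\phi_\omega)\le F(v^{\lambda_0})$, converts the hypothesis $\partial_\lambda^2S_\omega(\phi_\omega^\lambda)|_{\lambda=1}\le0$ (together with $K_\omega(\phi_\omega)=Q(\phi_\omega)=0$) into the bound $\omega\|\phi_\omega\|_{L^2}^2\le\bigl(\frac{q+1}{\beta}+\frac2\alpha-1-\frac4{\alpha\beta}\bigr)\frac N2F(\phi_\omega)$, and then shows $f(\lambda_0)\le f(1)$ for $f(\lambda)=S_\omega(v^\lambda)-\frac{\lambda^2}2Q(v)$ by reducing to explicit one-variable inequalities in $\lambda_0$, $\alpha=N(p-1)/2$, $\beta=N(q-1)/2$, proved by repeated monotonicity arguments (the functions $g_1$, $g_2$, $g_3$, $h$ in the paper). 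None of this chain — in particular the use of $F$ as a substitute for the single $L^{q+1}$-norm available in the one-power case, which is precisely what makes the double-power problem harder — is present in your proposal, so as it stands the argument has a genuine gap exactly at the step the theorem is about; the surrounding invariance and virial reasoning, which you do carry out, matches the paper's Lemma~\ref{invariant} and final proof.
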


Theorem~\ref{mainthm} follows from Theorem~\ref{blowup} because the scaling of the ground state $\phi_\omega^\lambda$ belongs to $\mc{B}_\omega\cap\Sigma$ for all $\lambda>1$
(see Section~\ref{sec:si} below).

The proof of Theorem~\ref{blowup} is based on the variational argument in Ohta~\cite{Ohta18} and Fukaya and Ohta \cite{FOpre}.
Firstly, we derive the key estimate
$Q(v)/2\le S_\omega(v)-S_\omega(\phi_\omega)$
for all $v\in\mc{B}_\omega$
(Lemma~\ref{keylem}).
Then by using the conservation laws,
the variational characterization of the ground state by the Nehari functional,
and the key estimate,
we show the invariance of $\mc{B}_\omega$ under the flow of \eqref{nls} (Lemma~\ref{invariant}).
Combining the virial identity with the key estimate, finally, we can obtain blowup of solutions to \eqref{nls} with initial data belonging to $\mc{B}_\omega\cap\Sigma$ by the classical argument as in Berestycki and Cazenave \cite{BC81}.

We prove the key estimate $Q/2\le S_\omega-S_\omega(\phi_\omega)$ on $\mc{B}_\omega$ following the proof of the same estimate for \eqref{nlsi} in \cite[Lemma~3.2]{FOpre}.
The proof relies on the variational characterization of the ground state by the Nehari functional
\[
S_\omega(\phi_\omega)
=\inf\{\, S_\omega(v)\mid
v\in H^1(\mb{R}^N)\setminus\{0\},~
K_\omega(v)=0\,\}
\]
and the property of the graph of the function $\lambda\mapsto S_\omega(v^\lambda)$.
Note that the graph of $S_\omega(v^\lambda)$ for \eqref{nls} has the same property as that for \eqref{nlsi}.
In the case of \eqref{nlsi},
since the action $\tilde S_\omega$ can be expressed by using the Nehari functional $\tilde{K}_\omega(v)\ce\pt_{\lambda}\tilde S_\omega(\lambda v)|_{\lambda=1}$ as
\begin{equation}\label{skq}
\tilde S_\omega(v)
=\frac12\tilde K_\omega(v)+\frac{(q-1)}{2(q+1)}\|v\|_{L^{q+1}}^{q+1},
\end{equation}
the above variational characterization can be written by using $L^{q+1}$-norm.
Therefore, in \cite{FOpre},
they used not only the action but also $L^{q+1}$-norm effectively.

On the other hand,
in the case of \eqref{nls},
the action $S_\omega$ cannot be expressed as \eqref{skq} because \eqref{nls} has double power nonlinearity.
Due to this fact, we can not directly apply the proof in \cite{FOpre}.
However, in this case,
we see that the action can be expressed as
\[
S_\omega(v)
=\frac12K_\omega(v)
+\frac12F(v),
\]
where
\[
F(v)=\frac{a(p-1)}{2(p+1)}\|v\|_{L^{p+1}}^{p+1}
+\frac{b(q-1)}{2(q+1)}\|v\|_{L^{q+1}}^{q+1}.
\]
Therefore, we can use $F$ instead of $L^{q+1}$-norm.
By applying the argument in \cite{FOpre} using $F$,
although the calculation processes differ from that in \cite{FOpre},
we can prove the key estimate above.

We finally remark that in fact, the assumption 
$\pt_\lambda^2S_\omega(\phi_\omega^\lambda)|_{\lambda=1}\le0$ is not a sufficient condition for the instability of standing waves
(see \cite[Section~4]{OY16} for related remarks).
However, in \cite{FOpre,Ohta18} and this paper, 
this assumption plays a very important role in the proof of the strong instability of standing waves.
It seems an interesting problem whether the unstable standing wave is strongly unstable or not if the assumption 
$\pt_\lambda^2S_\omega(\phi_\omega^\lambda)|_{\lambda=1}\le0$
is broken.

The rest of this paper is organized as follows.
In Section~\ref{sec:blowup},
we prove Theorem~\ref{blowup}, 
that is,
we prove that if $\pt_\lambda^2S_\omega(\phi_\omega^\lambda)|_{\lambda=1}\le0$,
then the solution of \eqref{nls} with $u(0)=u_0\in\mc B_\omega\cap\Sigma$ blows up in finite time.
In Section~\ref{sec:si},
we prove the strong instability of standing waves by using Theorem~\ref{blowup}.


\section{Blowup}\label{sec:blowup}

In this section,
we prove Theorem~\ref{blowup}.
Throughout this section,
we assume \eqref{asmp} and $\omega>0$.
Recall that the ground state $\phi_\omega\in\mc{G}_\omega$
satisfies $K_\omega(\phi_\omega)=0$ and the variational characterization
\begin{equation}\label{vari-char4}
S_\omega(\phi_\omega)
=\inf\{\, S_\omega(v)\mid
v\in H^1(\mb{R}^N)\setminus\{0\},~
K_\omega(v)=0\,\}
\end{equation}
(see, e.g., \cite{LeCoz08-2,Lions84}),
where $K_\omega$ is the Nehari functional defined in \eqref{Fv}.

Firstly,
we prove the key lemma in the proof.
Note that the action $S_\omega$ is expressed as
\begin{equation}\label{skf}
S_\omega(v)
=\frac12K_\omega(v)
+\frac12F(v),
\end{equation}
where 
\[
F(v)
=\frac{a(p-1)}{p+1}\|v\|_{L^{p+1}}^{p+1}
+\frac{b(q-1)}{q+1}\|v\|_{L^{q+1}}^{q+1}.
\]
Therefore, the characterization \eqref{vari-char4} is rewritten as
\begin{equation}\label{vari-char5}
S_\omega(\phi_\omega)
=\frac12F(\phi_\omega)
=\inf\left\{\,\frac12F(v)\mathrel{}\middle|\mathrel{}
v\ne0,~
K_\omega(v)=0\,\right\}.
\end{equation}

Let
\[
\alpha=\frac{N(p-1)}{2},\quad
\beta=\frac{N(q-1)}{2}.
\]
Using this notation,
we have
\begin{align*}
S_\omega(v^\lambda)
&=\frac{\lambda^2}2\|\nabla v\|_{L^2}^2
+\frac{\omega}{2}\|v\|_{L^2}^2
-\frac{a\lambda^\alpha}{p+1}\|v\|_{L^{p+1}}^{p+1}
-\frac{b\lambda^\beta}{q+1}\|v\|_{L^{q+1}}^{q+1}, \\
K_\omega(v^\lambda)
&=\lambda^2\|\nabla v\|_{L^2}^2
+\omega\|v\|_{L^2}^2
-a\lambda^\alpha\|v\|_{L^{p+1}}^{p+1}
-b\lambda^\beta\|v\|_{L^{q+1}}^{q+1}, \\
\frac{N}{2}F(v^{\lambda})
&=\frac{a\alpha\lambda^\alpha}{p+1}\|v\|_{L^{p+1}}^{p+1}
+\frac{b\beta\lambda^\beta}{q+1}\|v\|_{L^{q+1}}^{q+1}, \\
Q(v)
&=\|\nabla v\|_{L^2}^2
-\frac{a\alpha}{p+1}\|v\|_{L^{p+1}}^{p+1}
-\frac{b\beta}{q+1}\|v\|_{L^{q+1}}^{q+1}, \\
\pt_\lambda^2S_\omega(v^\lambda)|_{\lambda=1}
&=\|\nabla v\|_{L^2}^2
-\frac{a\alpha(\alpha-1)}{p+1}\|v\|_{L^{p+1}}^{p+1}
-\frac{b\beta(\beta-1)}{q+1}\|v\|_{L^{q+1}}^{q+1},
\end{align*}
where $v^\lambda(x)=\lambda^{N/2}v(\lambda x)$.
Note that by $S_\omega'(\phi_\omega)=0$,
\[
K_\omega(\phi_\omega)
=\langle S_\omega'(\phi_\omega),\phi_\omega\rangle
=0,\quad
Q(\phi_\omega)
=\langle S_\omega'(\phi_\omega),\pt_\lambda\phi_\omega^\lambda|_{\lambda=1}\rangle
=0.
\]

\begin{lemma}\label{keylem}
Assume that $\phi_\omega\in\mc{G}_\omega$ satisfies $\pt_\lambda^2S_\omega(\phi_\omega^\lambda)|_{\lambda=1}\le0$.
Let $v\in H^1(\mb{R}^N)$ satisfy
\[
v\ne0,\quad
\|v\|_{L^2}^2\le\|\phi_\omega\|_{L^2}^2,\quad
K_\omega(v)\le0,\quad
Q(v)\le0.
\]
Then
\begin{equation*}
\frac{Q(v)}{2}
\le S_\omega(v)
-S_\omega(\phi_\omega).
\end{equation*}
\end{lemma}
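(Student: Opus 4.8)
The plan is to recast the desired inequality as a bound on the functional $H\ce S_\omega-\tfrac12Q$ and to transport that bound from a Nehari‑constrained $L^2$‑rescaling of $v$ back to $v$ itself. First observe that $\tfrac12Q(v)\le S_\omega(v)-S_\omega(\phi_\omega)$ is equivalent to $H(v)\ge S_\omega(\phi_\omega)$, where $H(w)\ce S_\omega(w)-\tfrac12Q(w)$; by \eqref{skf}, any nonzero $w$ with $K_\omega(w)=0$ satisfies $S_\omega(w)=\tfrac12F(w)$, so \eqref{vari-char5} reads $S_\omega(\phi_\omega)=\inf\{\tfrac12F(w):w\ne0,\ K_\omega(w)=0\}$. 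Hence it is enough to produce a nonzero $w$ with $K_\omega(w)=0$ and $S_\omega(w)\le H(v)$: then $S_\omega(\phi_\omega)\le\tfrac12F(w)=S_\omega(w)\le H(v)$. We may assume $K_\omega(v)<0$, since if $K_\omega(v)=0$ the claim is immediate from \eqref{vari-char4} together with $Q(v)\le0$.

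The natural candidates for $w$ are the $L^2$‑invariant rescalings $v^\lambda$. Along this family $\pt_\lambda S_\omega(v^\lambda)=\lambda^{-1}Q(v^\lambda)$, and, using the formulae recorded before Lemma~\ref{keylem},
\[
H(v^\lambda)=\frac\omega2\|v\|_{L^2}^2-\frac{a\bigl(1-\tfrac\alpha2\bigr)}{p+1}\|v\|_{L^{p+1}}^{p+1}\lambda^\alpha+\frac{b\bigl(\tfrac\beta2-1\bigr)}{q+1}\|v\|_{L^{q+1}}^{q+1}\lambda^\beta,
\]
which, since $0<\alpha<2<\beta$ and $v\ne0$, is strictly decreasing on $(0,\lambda_*]$ and strictly increasing on $[\lambda_*,\infty)$ for a unique $\lambda_*=\lambda_*(v)>0$. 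The same structure holds for $\phi_\omega$; moreover, since $Q(\phi_\omega)=0$ gives $H(\phi_\omega^\lambda)\big|_{\lambda=1}=S_\omega(\phi_\omega)$, a direct computation (substituting $Q(\phi_\omega)=0$ into the expression for $\pt_\lambda^2S_\omega(\phi_\omega^\lambda)|_{\lambda=1}$) shows that the hypothesis $\pt_\lambda^2S_\omega(\phi_\omega^\lambda)|_{\lambda=1}\le0$ is exactly the statement $\lambda_*(\phi_\omega)\le1$, i.e.\ that $\lambda=1$ lies at or beyond the minimum of $\lambda\mapsto H(\phi_\omega^\lambda)$. Now $K_\omega(v^\lambda)\to\omega\|v\|_{L^2}^2>0$ as $\lambda\to0^+$ while $K_\omega(v^1)=K_\omega(v)<0$, so $\lambda\mapsto K_\omega(v^\lambda)$ has a zero; if a zero $\lambda_0$ can be chosen with $Q(v^{\lambda_0})\le0$ and $H(v^{\lambda_0})\le H(v)$, then $w=v^{\lambda_0}$ finishes the proof, because $S_\omega(v^{\lambda_0})=H(v^{\lambda_0})+\tfrac12Q(v^{\lambda_0})\le H(v^{\lambda_0})\le H(v)$.

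The hard part will be exactly this choice of $\lambda_0$. The condition $Q(v^{\lambda_0})\le0$ is governed by the identity $Q(v^{\lambda_0})=-(K_\omega-Q)(v^{\lambda_0})$ on $\{K_\omega(v^\cdot)=0\}$ together with the fact that $\lambda\mapsto(K_\omega-Q)(v^\lambda)=\omega\|v\|_{L^2}^2-\tfrac{a(p+1-\alpha)}{p+1}\|v\|_{L^{p+1}}^{p+1}\lambda^\alpha-\tfrac{b(q+1-\beta)}{q+1}\|v\|_{L^{q+1}}^{q+1}\lambda^\beta$ is strictly decreasing (here $p+1-\alpha>0$ and $q+1-\beta>0$ by \eqref{asmp}), so $Q(v^{\lambda_0})\le0$ holds precisely when $\lambda_0$ does not exceed the unique zero of that function; and $H(v^{\lambda_0})\le H(v)$ is automatic as soon as $\lambda_0$ lies between $\lambda_*(v)$ and $1$. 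The delicate regime is when $\lambda=1$ sits on the strictly decreasing branch of $\lambda\mapsto H(v^\lambda)$ (i.e.\ $1<\lambda_*(v)$): then the easily available zero of $K_\omega(v^\cdot)$ lies in $(0,1)$, on the wrong side, and one must instead catch a zero of $K_\omega(v^\cdot)$ on $[1,\lambda_*(v)]$, i.e.\ rule out that $K_\omega(v^\cdot)$ stays negative on all of $[1,\infty)$ in this regime.

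This is where the two remaining hypotheses enter. The constraint $\|v\|_{L^2}\le\|\phi_\omega\|_{L^2}$ bounds the $\lambda$‑independent term $\tfrac\omega2\|v\|_{L^2}^2$ of $H(v^\lambda)$, and $\pt_\lambda^2S_\omega(\phi_\omega^\lambda)|_{\lambda=1}\le0$ fixes the relative magnitudes of $\|\phi_\omega\|_{L^{p+1}}$ and $\|\phi_\omega\|_{L^{q+1}}$ through $\lambda_*(\phi_\omega)\le1$; comparing $v$ against $\phi_\omega$ via these two facts excludes the bad configuration and supplies the required $\lambda_0$. I would carry out this comparison following the proof of the analogous estimate for \eqref{nlsi} in \cite[Lemma~3.2]{FOpre}, with $F$ playing the role of the $L^{q+1}$‑norm used there; as the authors remark, the double‑power structure changes the intermediate algebra even though the geometry of the graph of $\lambda\mapsto S_\omega(v^\lambda)$ — on which the whole argument rests — is unchanged.
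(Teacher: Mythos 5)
Your setup is sound as far as it goes: reducing the claim to producing a nonzero $w$ with $K_\omega(w)=0$ and $S_\omega(w)\le S_\omega(v)-\tfrac12Q(v)$ via \eqref{vari-char5}, taking $w=v^{\lambda_0}$ along the $L^2$-invariant scaling, and observing that the hypothesis $\partial_\lambda^2S_\omega(\phi_\omega^\lambda)|_{\lambda=1}\le0$ (together with $Q(\phi_\omega)=0$) says exactly that $\lambda=1$ sits at or beyond the minimum of $\lambda\mapsto S_\omega(\phi_\omega^\lambda)-\tfrac12Q(\phi_\omega^\lambda)$ are all correct observations, and they parallel the opening of the actual proof. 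But the proposal stops precisely where the lemma's real content begins. In your formulation everything hinges on producing a zero $\lambda_0$ of $\lambda\mapsto K_\omega(v^\lambda)$ satisfying \emph{both} $Q(v^{\lambda_0})\le0$ and $H(v^{\lambda_0})\le H(v)$, and in the ``delicate regime'' $\lambda_*(v)>1$ you must catch such a zero on $[1,\lambda_*(v)]$. You assert that the constraints $\|v\|_{L^2}\le\|\phi_\omega\|_{L^2}$ and $\lambda_*(\phi_\omega)\le1$ ``exclude the bad configuration and supply the required $\lambda_0$,'' but no argument is given, and it is not obvious: $K_\omega(v^\lambda)\to-\infty$ as $\lambda\to\infty$, so nothing forces a Nehari zero at or beyond $\lambda=1$, and even if one exists, the requirement $Q(v^{\lambda_0})\le0$ (equivalently $(K_\omega-Q)(v^{\lambda_0})\ge0$, with $K_\omega-Q$ decreasing along the scaling) pulls in the opposite direction from taking $\lambda_0$ large. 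Deferring this to ``follow \cite[Lemma~3.2]{FOpre} with $F$ in place of the $L^{q+1}$-norm'' is not a proof; the adaptation of that argument to the double-power setting is exactly the nontrivial work.

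Note also that the paper's proof avoids your delicate regime altogether, and in a way that makes your extra requirements unnecessary: it only uses the zero $\lambda_0\in(0,1]$ guaranteed by $K_\omega(v)\le0$, and instead of comparing $H(v^{\lambda_0})$ with $H(v)$ it compares $f(\lambda_0)$ with $f(1)$ for $f(\lambda)=S_\omega(v^\lambda)-\tfrac{\lambda^2}{2}Q(v)$, with $Q$ frozen at $v$; then $Q(v)\le0$ alone yields \eqref{aim2}, and no sign information on $Q(v^{\lambda_0})$ is needed. The inequality $f(\lambda_0)\le f(1)$ is equivalent to the explicit ratio bound \eqref{aim} between $\|v\|_{L^{p+1}}^{p+1}$ and $\|v\|_{L^{q+1}}^{q+1}$, and establishing it is where all the hypotheses are consumed quantitatively: the identity \eqref{alphabeta}, $K_\omega(\phi_\omega)=Q(\phi_\omega)=0$ and $\partial_\lambda^2S_\omega(\phi_\omega^\lambda)|_{\lambda=1}\le0$ give a bound on $\omega\|\phi_\omega\|_{L^2}^2$ by $F(\phi_\omega)$, which is transported to $v$ via $\|v\|_{L^2}\le\|\phi_\omega\|_{L^2}$ and $F(\phi_\omega)\le F(v^{\lambda_0})$ (this last from \eqref{vari-char5}), leading to \eqref{2pq}, \eqref{Lpq}, and finally a monotonicity analysis of the explicit functions $g_1,h,g_2,g_3$ of $\lambda_0\in(0,1)$. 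None of this chain, nor any substitute for it, appears in your proposal, so as written it does not prove the lemma; to complete your route you would additionally have to prove the existence claim in the delicate regime, which may well be harder than the comparison the paper actually makes.
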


\begin{proof}
Since $\lim_{\lambda\searrow0}K_\omega(v^\lambda)=\omega\|v\|_{L^2}^2>0$  and $K_\omega(v)\le0$,
there exists $\lambda_0\in(0,1]$ such that
$K_\omega(v^{\lambda_0})=0$.
By the definition of the scaling $v^\lambda$ and \eqref{vari-char5},
we have
\begin{align}\label{L^2l0}
&\|v^{\lambda_0}\|_{L^2}
=\|v\|_{L^2}
\le\|\phi_\omega\|_{L^2}, \\ \label{Fl0}
&\frac{N}{2}F(\phi_\omega)
\le\frac{N}{2}F(v^{\lambda_0})
=\frac{a\alpha\lambda_0^\alpha}{p+1}\|v\|_{L^{p+1}}^{p+1}
+\frac{b\beta\lambda_0^\beta}{q+1}\|v\|_{L^{q+1}}^{q+1}.
\end{align}
Now, we define
\begin{align*}
f(\lambda)
&=S_\omega(v^\lambda)
-\frac{\lambda^2}{2}Q(v) \\
&=\frac{\omega}{2}\|v\|_{L^2}^2
-\frac{a}{p+1}\left(\lambda^\alpha-\frac{\alpha\lambda^2}{2}\right)\|v\|_{L^{p+1}}^{p+1}
-\frac{b}{q+1}\left(\lambda^\beta-\frac{\beta\lambda^2}{2}\right)\|v\|_{L^{q+1}}^{q+1}.
\end{align*}
If we have $f(\lambda_0)\le f(1)$,
then by \eqref{vari-char4} and $Q(v)\le0$,
we obtain
\begin{equation}\label{aim2}
S_\omega(\phi_\omega)
\le S_\omega(v^{\lambda_0})
\le S_\omega(v^{\lambda_0})-\frac{\lambda_0^2}{2}Q(v)
\le S_\omega(v)-\frac{Q(v)}{2}.
\end{equation}
This is the desired inequality.

In what follows, we prove the inequality $f(\lambda_0)\le f(1)$.
This is equivalent to
\begin{equation}\label{aim}
\frac{a}{p+1}\|v\|_{L^{p+1}}^{p+1}
\le\frac{b}{q+1}\cdot\frac{2\lambda_0^\beta-\beta\lambda_0^2-2+\beta}{\alpha\lambda_0^2-2\lambda_0^\alpha-\alpha+2}\|v\|_{L^{q+1}}^{q+1}.
\end{equation}
Since 
\begin{equation}\label{alphabeta}
\frac{p+1}\alpha+\frac2\beta
=\frac2N+\frac2\beta+\frac2\alpha
=\frac{q+1}\beta+\frac2\alpha,
\end{equation}
we have
\begin{align*}
&K_\omega(\phi_\omega)
+\frac{2}{\alpha\beta}\pt_{\lambda}^2S_\omega(\phi_\omega^\lambda)|_{\lambda=1}
-\left(1+\frac{2}{\alpha\beta}\right)Q(\phi_\omega) \\
&\quad=\omega\|\phi_\omega\|_{L^2}^2
-\frac{a\alpha}{p+1}\left(\frac{p+1}{\alpha}+\frac2\beta-1-\frac4{\alpha\beta}\right)\|\phi_\omega\|_{L^{p+1}}^{p+1} \\
&\qquad-\frac{b\beta}{q+1}\left(\frac{q+1}{\beta}+\frac2\alpha-1-\frac4{\alpha\beta}\right)\|\phi_\omega\|_{L^{q+1}}^{q+1} \\
&\quad=\omega\|\phi_\omega\|_{L^2}^2
-\left(\frac{q+1}{\beta}+\frac2\alpha-1-\frac4{\alpha\beta}\right)\frac{N}{2}F(\phi_\omega).
\end{align*}
Therefore, by $K_\omega(\phi_\omega)=Q(\phi_\omega)=0$ and the assumption $\pt_\lambda^2S_\omega(\phi_\omega^\lambda)|_{\lambda=1}\le0$,
we obtain
\[
\omega\|\phi_\omega\|_{L^2}^2
\le\left(\frac{q+1}{\beta}+\frac2\alpha-1-\frac4{\alpha\beta}\right)\frac{N}{2}F(\phi_\omega).
\]
Combining \eqref{L^2l0} and \eqref{Fl0} with this inequality,
and using \eqref{alphabeta} again,
it follows that
\begin{equation}\label{2pq}
\begin{aligned}
\omega\|v\|_{L^2}^2
&\le\left(a
+\frac{a}{p+1}\cdot\frac1\beta\left(2\alpha-\alpha\beta-4\right)\right)\lambda_0^\alpha\|v\|_{L^{p+1}}^{p+1} \\
&\quad+\left(b+\frac{b}{q+1}\cdot\frac1\alpha\left(2\beta-\alpha\beta-4\right)\right)\lambda_0^\beta\|v\|_{L^{q+1}}^{q+1}.
\end{aligned}
\end{equation}
Moreover, by $K_\omega(v^{\lambda_0})=0$,
$Q(v)\le0$, and
\eqref{2pq},
we deduce
\begin{align*}
a\|v\|_{L^{p+1}}^{p+1}
&=\lambda_0^{2-\alpha}\|\nabla v\|_{L^2}^2
+\lambda_0^{-\alpha}\omega\|v\|_{L^2}^2
-b\lambda_0^{\beta-\alpha}\|v\|_{L^{q+1}}^{q+1} \\
&\le\lambda_0^{2-\alpha}\left(\frac{a\alpha}{p+1}\|v\|_{L^{p+1}}^{p+1}+\frac{b\beta}{q+1}\|v\|_{L^{q+1}}^{q+1}\right) \\
&\quad+\left(a+\frac{a}{p+1}\cdot\frac1{\beta}\left(2\alpha-\alpha\beta-4\right)\right)\|v\|_{L^{p+1}}^{p+1} \\
&\quad+\left(b+\frac{b}{q+1}\cdot\frac1{\alpha}\left(2\beta-\alpha\beta-4\right)\right)\lambda_0^{\beta-\alpha}\|v\|_{L^{q+1}}^{q+1} 
-b\lambda_0^{\beta-\alpha}\|v\|_{L^{q+1}}^{q+1} \\
&=\left(a+\frac{a}{p+1}\cdot\frac{1}{\beta}\left(2\alpha-\alpha\beta-4+\alpha\beta\lambda_0^{2-\alpha}\right)\right)\|v\|_{L^{p+1}}^{p+1} \\
&\quad+\frac{b}{q+1}\cdot\frac{1}{\alpha}\left(\left(2\beta-\alpha\beta-4\right)\lambda_0^{\beta-\alpha}+\alpha\beta\lambda_0^{2-\alpha}\right)\|v\|_{L^{q+1}}^{q+1},
\end{align*} 
and thus
\begin{align*}
&\frac{a}{p+1}\cdot\frac{1}{\beta}\left(\alpha\beta+4-2\alpha-\alpha\beta\lambda_0^{2-\alpha}\right)\|v\|_{L^{p+1}}^{p+1} \\
&\quad\le\frac{b}{q+1}\cdot\frac{1}{\alpha}\left(\left(2\beta-\alpha\beta-4\right)\lambda_0^{\beta-\alpha}+\alpha\beta\lambda_0^{2-\alpha}\right)\|v\|_{L^{q+1}}^{q+1}.
\end{align*}
Since 
$\alpha\beta+4-2\alpha-\alpha\beta\lambda_0^{2-\alpha}
\ge4-2\alpha>0$,
this is rewritten as
\begin{equation}\label{Lpq}
\frac{a}{p+1}\|v\|_{L^{p+1}}^{p+1}
\le\frac{b}{q+1}
\cdot\frac{\beta(2\beta-\alpha\beta-4)\lambda_0^{\beta-\alpha}+\alpha\beta^2\lambda_0^{2-\alpha}}{\alpha(\alpha\beta+4-2\alpha-\alpha\beta\lambda_0^{2-\alpha})}
\|v\|_{L^{q+1}}^{q+1}.
\end{equation}
In view of \eqref{aim} and \eqref{Lpq},
it suffices to show that
\[
\frac{\beta(2\beta-\alpha\beta-4)\lambda_0^{\beta-\alpha}+\alpha\beta^2\lambda_0^{2-\alpha}}{\alpha(\alpha\beta+4-2\alpha-\alpha\beta\lambda_0^{2-\alpha})}
\le\frac{2\lambda_0^\beta-\beta\lambda_0^2-2+\beta}{\alpha\lambda_0^2-2\lambda_0^\alpha-\alpha+2}.
\]
This inequality follows if we have
\begin{align*}
g_1(\lambda)
&\ce\frac{\alpha(2\lambda^\beta-\beta\lambda^2-2+\beta)(\alpha\beta+4-2\alpha-\alpha\beta\lambda^{2-\alpha})}{(\alpha\lambda^2-2\lambda^\alpha-\alpha+2)\lambda^{\beta-\alpha}}\\
&\quad-\beta(2\beta-\alpha\beta-4)
-\frac{\alpha\beta^2}{\lambda^{\beta-2}} \\
&\ge0
\end{align*}
for all $\lambda\in(0,1)$.
Since $\lim_{\lambda\nearrow1}g_1(\lambda)=0$,
it is enough to show that $g_1'(\lambda)\le0$ for all $\lambda\in(0,1)$.
A direct calculation shows 
\begin{align*}
g_1'(\lambda)
&=\frac{a\lambda^{\alpha-\beta+1}}{(\alpha\lambda^2-2\lambda^\alpha-\alpha+2)^2} \\
&\cdot\big((2-\alpha)(\beta-2)
-2\beta\lambda^{-\alpha}
+(\alpha\beta-2\alpha+4)\lambda^{-2}\big) \\
&\cdot\big(2\alpha(2-\alpha)\lambda^\beta
-\alpha\beta(\beta-\alpha)\lambda^2
+2\beta(\beta-2)\lambda^\alpha
-(2-\alpha)(\beta-2)(\beta-\alpha)\big).
\end{align*}
Now, we put
\begin{align*}
h(\lambda)
=(2-\alpha)(\beta-2)
-2\beta\lambda^{-\alpha}
+(\alpha\beta-2\alpha+4)\lambda^{-2}.
\end{align*}
Since $h(1)=0$ and for $\lambda\in(0,1)$,
\[
h'(\lambda)
=-2\alpha\beta(\lambda^{-3}-\lambda^{-\alpha-1})
-4(2-\alpha)\lambda^{-3}\le0,
\]
we have $h(\lambda)\ge0$.
Thus, we only have to show that
\[
g_2(\lambda)
\ce2\alpha(2-\alpha)\lambda^\beta
-\alpha\beta(\beta-\alpha)\lambda^2
+2\beta(\beta-2)\lambda^\alpha
-(2-\alpha)(\beta-2)(\beta-\alpha)\le0
\]
for all $\lambda\in(0,1)$.
Since $g_2(1)=0$,
it suffices to show that
\[
g_2'(\lambda)
=2\alpha\beta\lambda^{\alpha-1}\left(
(2-\alpha)\lambda^{\beta-\alpha}
-(\beta-\alpha)\lambda^{2-\alpha}
+\beta-2
\right)
\ge0
\]
for all $\lambda\in(0,1)$.
This is equivalent to
\[
g_3(\lambda)
\ce(2-\alpha)\lambda^{\beta-\alpha}
-(\beta-\alpha)\lambda^{2-\alpha}
+\beta-2\ge0.
\]
Since $g_3(1)=0$,
and
\[
g_3'(\lambda)
=-(\beta-\alpha)(2-\alpha)\lambda^{1-\alpha}(1-\lambda^{\beta-2})
\le0
\]
for all $\lambda\in(0,1)$,
we obtain $g_3(\lambda)\ge0$ for all $\lambda\in(0,1)$.
This implies $f(\lambda_0)\le f(1)$.
Thus,
the inequality~\eqref{aim2} follows.
This completes the proof.
\end{proof}

Next, we show that the set $\mc{B}_\omega$ is invariant under the flow of \eqref{nls}.
Recall that the definition of $\mc{B}_\omega$ is given by
\[
\mc{B}_\omega
=\left\{v\in H^1(\mb{R}^N)\mathrel{}\middle|\mathrel{}
\begin{alignedat}{2}
&S_\omega(v)<S_\omega(\phi_\omega),~& &
\|v\|_{L^2}\le\|\phi_\omega\|_{L^2}, \\
&K_\omega(v)<0,~& &
Q(v)<0
\end{alignedat}
\right\}.
\]

\begin{lemma}\label{invariant}
Let $u_0\in\mc{B}_\omega$.
Then the solution $u(t)$ of \eqref{nls} with $u(0)=u_0$ belongs to $\mc{B}_\omega$ for all $t\in[0,T_{\max})$.
\end{lemma}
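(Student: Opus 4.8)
The plan is to handle the first two defining conditions of $\mc{B}_\omega$ via the conservation laws, and to establish the two strict sign conditions $K_\omega(u(t))<0$ and $Q(u(t))<0$ by a continuity-and-contradiction argument, with Lemma~\ref{keylem} supplying the crucial estimate for the $Q$-condition. First I would observe that $S_\omega(v)=E(v)+\tfrac{\omega}{2}\|v\|_{L^2}^2$ is conserved along the flow of \eqref{nls}, since both $E$ and the $L^2$-norm are. Hence, for $u_0\in\mc{B}_\omega$, we have $S_\omega(u(t))=S_\omega(u_0)<S_\omega(\phi_\omega)$ and $\|u(t)\|_{L^2}=\|u_0\|_{L^2}\le\|\phi_\omega\|_{L^2}$ for all $t\in[0,T_{\max})$, so these two conditions persist automatically. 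Note also that $u(t)\ne0$ for every $t$, since otherwise $\|u_0\|_{L^2}=\|u(t)\|_{L^2}=0$, contradicting $K_\omega(u_0)<0$.

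For the Nehari functional, since $u\in C([0,T_{\max}),H^1(\mb{R}^N))$ and $K_\omega$ is continuous on $H^1(\mb{R}^N)$ (all powers in \eqref{asmp} being $H^1$-subcritical), the map $t\mapsto K_\omega(u(t))$ is continuous and negative at $t=0$. Were $K_\omega(u(t))\ge0$ for some $t$, the intermediate value theorem would produce $t_1\in(0,T_{\max})$ with $K_\omega(u(t_1))=0$; since $u(t_1)\ne0$, the variational characterization \eqref{vari-char4} would give $S_\omega(\phi_\omega)\le S_\omega(u(t_1))=S_\omega(u_0)<S_\omega(\phi_\omega)$, a contradiction. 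Hence $K_\omega(u(t))<0$ for all $t\in[0,T_{\max})$. The argument for $Q$ is identical in structure but uses Lemma~\ref{keylem} instead of \eqref{vari-char4}: the map $t\mapsto Q(u(t))$ is continuous and negative at $t=0$, so if $Q(u(t))\ge0$ for some $t$ there is $t_1\in(0,T_{\max})$ with $Q(u(t_1))=0$, and at this time all four hypotheses of Lemma~\ref{keylem} hold, namely $u(t_1)\ne0$, $\|u(t_1)\|_{L^2}^2\le\|\phi_\omega\|_{L^2}^2$, $K_\omega(u(t_1))<0$ by the previous step, and $Q(u(t_1))=0$. Lemma~\ref{keylem} then yields $0=Q(u(t_1))/2\le S_\omega(u(t_1))-S_\omega(\phi_\omega)=S_\omega(u_0)-S_\omega(\phi_\omega)<0$, a contradiction. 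Therefore $Q(u(t))<0$ for all $t$, and combining the four conclusions shows $u(t)\in\mc{B}_\omega$ throughout $[0,T_{\max})$.

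This is the classical invariant-set scheme in the spirit of Berestycki and Cazenave~\cite{BC81}, and the only substantive input is Lemma~\ref{keylem}, which is already in hand; I therefore do not anticipate any real difficulty in this lemma. The one point that requires a little care is the ordering of the two sign arguments: the $Q$-argument uses $K_\omega(u(t_1))<0$, so it must be carried out only after $K_\omega(u(t))<0$ has been established for all $t$ — which is precisely the order above.
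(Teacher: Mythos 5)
Your proposal is correct and follows essentially the same route as the paper: conservation of $S_\omega$ and the $L^2$-norm for the first two conditions, the variational characterization \eqref{vari-char4} plus continuity for $K_\omega<0$, and Lemma~\ref{keylem} applied at a hypothetical first time where $Q$ vanishes to get the contradiction for $Q<0$. The extra details you supply (nontriviality of $u(t)$ and the ordering of the two sign arguments) are exactly the points the paper leaves implicit, so there is nothing to change.
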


\begin{proof}
Since $S_\omega$ and $\|\cdot\|_{L^2}$ are the conserved quantities of \eqref{nls},
we have
$S_\omega(u(t))=S_\omega(u_0)<S_\omega(\phi_\omega)$
and $\|u(t)\|_{L^2}=\|u_0\|_{L^2}\le\|\phi_\omega\|_{L^2}$ for all $t\in[0,T_{\max})$.
Therefore, by \eqref{vari-char4},
we have $K_\omega(u(t))\ne0$ for all $t\in[0,T_{\max})$.
Moreover, by $K_\omega(u_0)<0$ and the continuity of the solution $u(t)$,
we obtain $K_\omega(u(t))<0$ for all $t\in[0,T_{\max})$.
Finally,
we show that $Q(u(t))<0$ for all $t\in[0,T_{\max})$.
If not,
there exists $t_0\in(0,T_{\max})$ such that $Q(u(t_0))=0$.
Then by Lemma~\ref{keylem} and $S_\omega(u(t_0))<S_\omega(\phi_\omega)$,
we have $Q(u(t_0))<0$.
This is a contradiction.
This completes the proof.
\end{proof}

Finally,
we prove the blowup result.

\begin{proof}[Proof of Theorem~\ref{blowup}]
By the virial identity~\eqref{virial},
Lemmas~\ref{keylem} and \ref{invariant},
and the conservation of $S_\omega$,
we have
\begin{align*}
\frac{d^2}{dt^2}\|xu(t)\|_{L^2}^2
&=8Q(u(t)) \\
&\le16\big(S_\omega(u(t))-S_\omega(\phi_\omega)\big)
=16\big(S_\omega(u_0)-S_\omega(\phi_\omega)\big)
<0
\end{align*}
for all $t\in[0,T_{\max})$.
This implies $T_{\max}<\infty$.
This completes the proof.
\end{proof}

\section{Strong instability}\label{sec:si}

In this section,
we prove Theorem~\ref{mainthm} using Theorem~\ref{blowup}.
Throughout this section,
we impose the assumption of Theorem~\ref{mainthm}.

We remark that
\begin{align*}
S_\omega(v^\lambda)
&=\frac12K_\omega(v^\lambda)
+\frac12F(v^\lambda) \\
&=\frac{\lambda^2}2\|\nabla v\|_{L^2}^2
+\frac{\omega}{2}\|v\|_{L^2}^2
-\frac{a\lambda^\alpha}{p+1}\|v\|_{L^{p+1}}^{p+1}
-\frac{b\lambda^\beta}{q+1}\|v\|_{L^{q+1}}^{q+1}, \\
Q(v^\lambda)
&=\lambda\pt_\lambda S_\omega(v^\lambda), \\
Q(\phi_\omega)
&=\pt_{\lambda}S_\omega(\phi_\omega^\lambda)|_{\lambda=1}
=0,\quad
\pt_{\lambda}^2S_\omega(\phi_\omega^\lambda)|_{\lambda=1}
\le0.
\end{align*}

\begin{lemma}\label{pol}
Assume that $\phi_\omega\in\mc{G}_\omega$ satisfies $\pt_\lambda^2S_\omega(\phi_\omega^\lambda)|_{\lambda=1}\le0$.
Then $\phi_\omega^\lambda\in\mc{B}_\omega$ for all $\lambda>1$.
\end{lemma}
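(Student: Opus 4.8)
The plan is to verify directly that $\phi_\omega^\lambda$ satisfies each of the four conditions defining $\mc{B}_\omega$ whenever $\lambda>1$. The $L^2$-constraint is immediate, since $v\mapsto v^\lambda$ preserves the $L^2$-norm, so $\|\phi_\omega^\lambda\|_{L^2}=\|\phi_\omega\|_{L^2}$. Everything else I would deduce from the sign of $Q(\phi_\omega^\lambda)$, which is the first thing to pin down. Here it is convenient to abbreviate $A=\|\nabla\phi_\omega\|_{L^2}^2$, $B=\frac{a\alpha}{p+1}\|\phi_\omega\|_{L^{p+1}}^{p+1}$, $C=\frac{b\beta}{q+1}\|\phi_\omega\|_{L^{q+1}}^{q+1}$, all strictly positive; then $Q(\phi_\omega)=0$ reads $A=B+C$, while $\pt_\lambda^2S_\omega(\phi_\omega^\lambda)|_{\lambda=1}=A-(\alpha-1)B-(\beta-1)C\le0$ becomes, after substituting $A=B+C$, the inequality $(2-\alpha)B\le(\beta-2)C$.

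For $Q(\phi_\omega^\lambda)<0$ I would use the scaling formula $Q(\phi_\omega^\lambda)=\lambda^2A-\lambda^\alpha B-\lambda^\beta C=\lambda^\alpha\eta(\lambda)$, where $\eta(\lambda)\ce\lambda^{2-\alpha}A-B-\lambda^{\beta-\alpha}C$ satisfies $\eta(1)=A-B-C=0$. Differentiating, $\eta'(\lambda)=\lambda^{1-\alpha}\bigl((2-\alpha)A-(\beta-\alpha)\lambda^{\beta-2}C\bigr)$, and for $\lambda>1$ one has $(2-\alpha)A=(2-\alpha)B+(2-\alpha)C\le(\beta-2)C+(2-\alpha)C=(\beta-\alpha)C<(\beta-\alpha)\lambda^{\beta-2}C$, using $0<\alpha<2<\beta$ and $\lambda^{\beta-2}>1$. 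Hence $\eta'<0$ on $(1,\infty)$, so $\eta(\lambda)<\eta(1)=0$ and therefore $Q(\phi_\omega^\lambda)<0$ for every $\lambda>1$. This is the step where the hypothesis $\pt_\lambda^2S_\omega(\phi_\omega^\lambda)|_{\lambda=1}\le0$ is genuinely used, and I expect it to be the main obstacle: in particular the borderline case $(2-\alpha)B=(\beta-2)C$ is rescued only by the strict inequality $\lambda^{\beta-2}>1$, which hinges on $\beta>2$ (the $L^2$-supercriticality of the $q$-power).

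With this in hand the remaining two conditions are short. Since $\pt_\lambda S_\omega(\phi_\omega^\lambda)=\lambda^{-1}Q(\phi_\omega^\lambda)<0$ for $\lambda>1$, integrating from $1$ gives $S_\omega(\phi_\omega^\lambda)-S_\omega(\phi_\omega)=\int_1^\lambda s^{-1}Q(\phi_\omega^s)\,ds<0$, which is the action inequality. Finally, from the identity \eqref{skf} we have $K_\omega(\phi_\omega^\lambda)=2S_\omega(\phi_\omega^\lambda)-F(\phi_\omega^\lambda)$; because $\phi_\omega\ne0$ and $\lambda>1$ forces $\lambda^\alpha>1$ and $\lambda^\beta>1$, we get $F(\phi_\omega^\lambda)>F(\phi_\omega)$, and combining with the action inequality and \eqref{vari-char5} (so that $2S_\omega(\phi_\omega)=F(\phi_\omega)$) yields $K_\omega(\phi_\omega^\lambda)=2S_\omega(\phi_\omega^\lambda)-F(\phi_\omega^\lambda)<2S_\omega(\phi_\omega)-F(\phi_\omega^\lambda)=F(\phi_\omega)-F(\phi_\omega^\lambda)<0$. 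Collecting the four facts shows $\phi_\omega^\lambda\in\mc{B}_\omega$ for all $\lambda>1$, completing the proof; all the strict inequalities degenerate to equalities at $\lambda=1$, consistently with $\phi_\omega\notin\mc{B}_\omega$.
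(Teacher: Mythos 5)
Your proof is correct and follows essentially the same route as the paper: check the four defining conditions of $\mc{B}_\omega$, with the $L^2$-norm preserved by scaling, $Q(\phi_\omega^\lambda)<0$ and $S_\omega(\phi_\omega^\lambda)<S_\omega(\phi_\omega)$ for $\lambda>1$ coming from $Q(\phi_\omega)=0$ together with $\pt_\lambda^2S_\omega(\phi_\omega^\lambda)|_{\lambda=1}\le0$, and then $K_\omega(\phi_\omega^\lambda)=2S_\omega(\phi_\omega^\lambda)-F(\phi_\omega^\lambda)<2S_\omega(\phi_\omega)-F(\phi_\omega)=0$. The only difference is that the paper disposes of the middle step by appealing to ``the graph of $\lambda\mapsto S_\omega(\phi_\omega^\lambda)$,'' whereas you verify that monotonicity explicitly via the sign of $\eta'(\lambda)$, which is a legitimate (and more detailed) justification of the same fact.
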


\begin{proof}
By the definition of the scaling $\lambda\mapsto v^\lambda$,
we have 
$\|\phi_\omega^\lambda\|_{L^2}
=\|\phi_\omega\|_{L^2}$
for all $\lambda>1$.
Since $\pt_{\lambda}S_\omega(\phi_\omega^\lambda)|_{\lambda=1}
=0$ and 
$\pt_{\lambda}^2S_\omega(\phi_\omega^\lambda)|_{\lambda=1}
\le0$,
in view of the graph of $\lambda\mapsto S_\omega(\phi_\omega^\lambda)$,
we see that $S_\omega(\phi_\omega^\lambda)<S_\omega(\phi_\omega)$
and $Q(\phi_\omega^\lambda)=\lambda\pt_{\lambda}S_\omega(\phi_\omega^\lambda)<0$ for all $\lambda>1$.
Finally, we obtain
\[K_\omega(\phi_\omega^\lambda)
=2S_\omega(\phi_\omega^\lambda)-F(\phi_\omega^\lambda)
<2S_\omega(\phi_\omega)-F(\phi_\omega)
=0\] for all $\lambda>1$.
This completes the proof.
\end{proof}

Now,
we prove our main theorem.

\begin{proof}[Proof of Theorem~\ref{mainthm}]
By an analogous argument in the proof of \cite[Theorem~8.1.1]{Cazenave},
we see that $\phi_\omega$ decays exponentially.
This implies $\phi_\omega\in\Sigma$,
where $\Sigma$ is the weighted space defined in \eqref{Sigma}. 
Therefore, combining this with Lemma~\ref{pol},
we have $\phi_\omega^\lambda\in\mc{B}_\omega\cap\Sigma$ for all $\lambda>1$.
Thus, Theorem~\ref{blowup} implies that for any $\lambda>1$,
the solution $u(t)$ of \eqref{nls} with $u(0)=\phi_\omega^\lambda$ blows up in finite time.
Moreover, we obtain $\phi_\omega^\lambda\to\phi_\omega$ in $H^1(\mb{R}^N)$ as $\lambda\searrow1$.
Hence, the standing wave solution $e^{i\omega t}\phi_\omega$ of \eqref{nls} is strongly unstable.
\end{proof}

\section*{Acknowledgements}

The first author was supported by Grant-in-Aid for JSPS Fellows 18J11090.
The second author was supported by JSPS KAKENHI Grant Numbers 18K03379 and 26247013.

Noriyoshi Fukaya

Department of Mathematics, Graduate School of Science, Tokyo University of Science,
1-3 Kagurazaka, Shinjuku-ku, Tokyo 162-8601, Japan

\textit{E-mail address}: \texttt{1116702@ed.tus.ac.jp}

\vspace\baselineskip

Masahito Ohta

Department of Mathematics, Tokyo University of Science, 
1-3 Kagurazaka, Shinjuku-ku, Tokyo 162-8601, Japan 

\textit{E-mail address}: \texttt{mohta@rs.tus.ac.jp}

\end{document}